\def\a{         \alpha}
\newcommand{\Top}{\operatorname{top}}
\newcommand{\RR}{{\mathbb R}}
\newcommand{\TT}{{\mathbb T}}
\newtheorem{theo}{\sc Theorem}[section]
\newtheorem{lemm}[theo]{\sc Lemma}
\newtheorem{coro}[theo]{\sc Corollary}
\theoremstyle{definition}
\theoremstyle{remark}
\newtheorem{rema}[theo]{\sc Remark}
\numberwithin{equation}{section}
\begin{document}
\title{Nondense orbits for Anosov diffeomorphisms of the $2$-torus}
\author{Jimmy Tseng}
\address{School of Mathematics, University of Bristol, University Walk, Bristol, BS8 1TW UK}
\email{j.tseng@bristol.ac.uk \quad \quad jimmytseng01@gmail.com}

\thanks{The author acknowledges the research leading to these results has received funding from the European Research Council under the European Union's Seventh Framework Programme (FP/2007-2013) / ERC Grant Agreement n. 291147.}

\begin{abstract}   Let $\lambda$ denote the probability Lebesgue measure on $\TT^2$.  For any $C^2$-Anosov diffeomorphism of the $2$-torus preserving $\lambda$ with measure-theoretic entropy equal to topological entropy, we show that the set of points with nondense orbits is hyperplane absolute winning (HAW).  This generalizes the result in~\cite[Theorem~1.4]{T4} for $C^2$-expanding maps of the circle.
\end{abstract}
\maketitle
\section{Introduction}\label{secIntro}  For a dynamical system $f: X \rightarrow X$ on a set $X$ with a topology, we say that a point has \textit{nondense forward orbit} if its forward orbit closure is a proper subset of $X$, and we call the set of these points the {\em nondense set} and denote it by $ND(f)$.  The study of nondense orbits is important for the interactions between ergodic theory and Diophantine approximation and, partly inspired by this, there has been much recent interest in these sets (see \cite{AN, BFK, Da2, Far, MT, T4, Ur, Wu15} for example).  Of particular interest is the size of $ND(f)$ and whether these sets have the winning property, which is a strengthening of having full Hausdorff dimension and is stable under intersections.  (See Section~\ref{secWinning} for the precise definition of the winning property.)  Almost all of the known dynamical systems that have winning nondense sets are (piecewise) linear or come from automorphisms.  A class of exceptions is the result by the author for $C^2$-expanding self-maps on the circle~\cite[Theorem~1.4]{T4}.  There is also a recent result by W.~Wu~\cite[Section~2]{Wu15} for partially hyperbolic diffeomorphisms on compact manifolds in which the diffeomorphisms are conformal  when restricted to the unstable manifolds or the unstable manifolds are one-dimensional.  Wu's result, however, shows only that the nondense set when restricted to an unstable manifold is winning, but does not show that the whole nondense set is winning.\footnote{Wu's result does imply that the whole nondense set has full Hausdorff dimension, which is a significant result.  We are, however, interested in the strengthening to winning because of its stability under intersections.  In particular, our result, Theorem~\ref{thmAnosov2TorusWinningND}, applies to the smaller set of points with nondense {\em complete} orbits because the winning property is stable under intersections and shows that this smaller set also has full Hausdorff dimension.}  General diffeomorphisms present a challenge in dimensions higher than one for determining whether the nondense set is winning or not.

\subsection{Statement of results}  The purpose of this brief note is to show that the members of a large, natural class of {\em nonlinear} self-maps on the $2$-torus have winning nondense sets, thereby, giving a large, natural class of higher dimensional examples of winning nondense sets for more general diffeomorphisms.  Since the winning property is stable under intersections, our results, in fact, also apply to nondense {\em complete} orbits.  Let $\lambda$ denote the probability Lebesgue measure on $\TT^2$.  Our main result is the following.

\bigskip

%*** the conditions on the map are ok; note that $\a>0$ may degrades under the $C^1$ conjugacy, so use HAW from \cite[page 4]{BFKRW}  ******

\begin{theo}\label{thmAnosov2TorusWinningND}  Let $g: \TT^2 \rightarrow \TT^2$ be a $C^2$-Anosov diffeomorphism preserving $\lambda$ such that $h_{\lambda}(g) = h_{\Top}(g)$.  Let $\boldsymbol{x}_0 \in \TT^2$.  Then the set \[ND(g, \boldsymbol{x}_0):= \left\{\boldsymbol{x} \in \TT^2 : \boldsymbol{x}_0 \notin \overline{\{g^n(x) \}_{n=0}^\infty}\right\}\] is hyperplane absolute winning (HAW) and thus has full Hausdorff dimension.
 
\end{theo}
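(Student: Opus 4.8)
\smallskip
\noindent\emph{Sketch of the approach.}
The plan is to transfer the statement to the linearization of $g$, where it is known, and to use the rigidity forced by the entropy hypothesis to make the linearizing conjugacy a $C^1$ diffeomorphism, so that it carries the winning property.

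Since $g$ is Anosov on $\TT^2$, it is topologically conjugate, by a homeomorphism $h$ homotopic to the identity, to the hyperbolic toral automorphism $A$ induced on $H_1(\TT^2;\ZZ)\cong\ZZ^2$; thus $h\circ g=A\circ h$ and, the two defining conditions corresponding under $h$, one has $ND(g,\boldsymbol{x}_0)=h^{-1}\big(ND(A,h(\boldsymbol{x}_0))\big)$. For the \emph{linear} automorphism $A$ the set $ND(A,\boldsymbol{y}_0)$ is HAW for every $\boldsymbol{y}_0\in\TT^2$: the preimages $A^{-n}B(\boldsymbol{y}_0,r)$ are explicit thin parallelograms and the invariant foliations are affine, so Alice wins the hyperplane absolute game by deleting, when Bob's radii pass through the scale $\asymp r\Lambda^{-n}$ (where $\Lambda:=e^{h_{\Top}(g)}>1$), a neighbourhood of the affine line carrying the relevant stable segment --- this is the absolute refinement of the classical winning results for toral maps, cf.\ \cite{Da2,BFK}. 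Combining this with the two standard facts that HAW is preserved under $C^1$ diffeomorphisms (it is a local, affine-scale property) and that HAW sets have full Hausdorff dimension, Theorem~\ref{thmAnosov2TorusWinningND} reduces to showing that $h$ is a $C^1$ diffeomorphism.

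Here the hypothesis $h_{\lambda}(g)=h_{\Top}(g)$ enters. It says exactly that Lebesgue measure $\lambda$ --- the unique absolutely continuous (hence SRB) invariant measure of $g$ --- is also the unique measure of maximal entropy. By Pesin's formula $h_{\lambda}(g)=\int\log\|Dg|_{E^u}\|\,d\lambda$, so $\lambda$ is simultaneously the equilibrium state of the geometric potential $-\varphi$, $\varphi:=\log\|Dg|_{E^u}\|$ (with topological pressure $0$), and of the zero potential (with pressure $h_{\Top}(g)=\log\Lambda$). Uniqueness of equilibrium states for H\"older potentials of $C^{1+\alpha}$ Anosov diffeomorphisms then forces $\varphi$ to be cohomologous to the constant $\log\Lambda$ (one uses that on the surface $\TT^2$ the bundle $E^u$ is $C^1$, so $\varphi$ is continuous, and that $\lambda$ has full support). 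Telescoping the Birkhoff sums of $\varphi$ along periodic orbits gives $\|Dg^n|_{E^u_p}\|=\Lambda^{\,n}$ at every periodic point $p$ of period $n$, and $\|Dg^n|_{E^s_p}\|=\Lambda^{-n}$ because $|\det Dg|\equiv 1$; so the periodic data of $g$ agrees with that of $A$ (the eigenvalue signs match as well, which follows from $g$ being homotopic to $A$). By the de~la~Llave--Marco--Moriy\'on smooth rigidity theorem for Anosov diffeomorphisms of surfaces, a topological conjugacy with matching periodic data is $C^{1+\alpha}$; in particular $h$ is $C^1$, as needed.

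The main obstacle is this last reduction: one must extract the cohomological triviality of $\varphi$ cleanly from the bare entropy equality (through the variational principle, Pesin's formula, and the description of equilibrium states) and then check that the smooth rigidity theorem genuinely applies at the limited regularity $C^2$, obtaining a conjugacy that is at least $C^1$. One can also bypass the rigidity machinery: the same input --- cohomological triviality of $\varphi$ --- makes the contraction of $g^{-n}$ along $E^u$ and expansion along $E^s$ uniformly comparable to $\Lambda^{-n}$ and $\Lambda^{n}$ throughout $\TT^2$, so $g^{-n}B(\boldsymbol{x}_0,r)$ is, near any point at scale $\asymp r\Lambda^{-n}$, a union of controlled thin strips each inside a comparable neighbourhood of one affine line (the stable foliation being $C^1$); Alice deletes these hyperplane neighbourhoods as Bob's radii sweep through the window attached to each $n$, and with her parameter chosen small in terms of $\Lambda$ and the distortion constant the outcome lies in $ND(g,\boldsymbol{x}_0)$.
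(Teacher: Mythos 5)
Your proposal is correct and follows essentially the same route as the paper: reduce to the linear model via a $C^1$ conjugacy (the paper quotes the smooth classification of volume-preserving $C^2$-Anosov diffeomorphisms of $\TT^2$ with $h_\lambda=h_{\Top}$ directly from~\cite[Corollary~20.4.5]{HKIntroMD}, whereas you re-derive it through equilibrium states and periodic-data rigidity), then invoke the HAW property of $ND(T,\cdot)$ for linear hyperbolic automorphisms and the invariance of HAW under $C^1$ diffeomorphisms. The only point worth flagging is that the HAW statement for the linear model should be cited as~\cite[Theorem~2.6]{BFKRW} rather than sketched from the older winning results of~\cite{Da2, BFK}.
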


\noindent Note that the hyperplane absolute winning property is a variant of the winning property and implies it (see Section~\ref{secWinning}).

\begin{coro}\label{coroAnosov2TorusWinningND} 
Let $\{g_m\}_{m=0}^\infty$ be a family of $C^2$-Anosov diffeomorphisms preserving $\lambda$ such that $h_{\lambda}(g_m) = h_{\Top}(g_m)$ for all $m$.  Let $\{\boldsymbol{x}_\ell\}_{\ell =0}^\infty \subset \TT^2$.  Then the set \begin{align}\label{eqnIntersectNDs}
 \bigcap_{m=0}^\infty \bigcap_{\ell=0}^\infty \bigg( ND(g_m, \boldsymbol{x}_\ell) \cap ND(g^{-1}_m, \boldsymbol{x}_\ell)\bigg) \end{align} is  hyperplane absolute winning (HAW) and thus has full Hausdorff dimension.
\end{coro}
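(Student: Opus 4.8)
The plan is to deduce Corollary~\ref{coroAnosov2TorusWinningND} from Theorem~\ref{thmAnosov2TorusWinningND} together with two soft facts: the invariance of the relevant dynamical invariants under taking inverses, and the stability of the HAW property under countable intersections.

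First I would observe that for each fixed $m$, the map $g_m^{-1}$ is again a $C^2$-Anosov diffeomorphism of $\TT^2$ preserving $\lambda$ (the stable and unstable bundles simply swap roles, and $\lambda$-invariance of $g_m$ is equivalent to that of $g_m^{-1}$). Moreover, both the measure-theoretic entropy and the topological entropy are invariant under inversion, $h_{\lambda}(g_m^{-1}) = h_{\lambda}(g_m)$ and $h_{\Top}(g_m^{-1}) = h_{\Top}(g_m)$, so the hypothesis $h_{\lambda}(g_m^{-1}) = h_{\Top}(g_m^{-1})$ holds as well. Hence Theorem~\ref{thmAnosov2TorusWinningND} applies to both $g_m$ and $g_m^{-1}$, and for every $\ell$ the sets $ND(g_m, \boldsymbol{x}_\ell)$ and $ND(g_m^{-1}, \boldsymbol{x}_\ell)$ are HAW.

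Next I would invoke the fact recalled in Section~\ref{secWinning} that, on a fixed ambient space --- here $\TT^2$ with its natural hyperplane structure --- a countable intersection of HAW sets is again HAW. Applying this to the countable family $\{ND(g_m, \boldsymbol{x}_\ell),\, ND(g_m^{-1}, \boldsymbol{x}_\ell) : m, \ell \ge 0\}$ shows that the set in~\eqref{eqnIntersectNDs} is HAW. Finally, since HAW implies winning, which in turn implies full Hausdorff dimension, and $\dim \TT^2 = 2$, the intersection has full Hausdorff dimension, which completes the proof.

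As for obstacles, there is essentially no new difficulty: all the substance lies in Theorem~\ref{thmAnosov2TorusWinningND}. The only points requiring a line of care are (i) confirming that $g_m^{-1}$ genuinely satisfies the entropy hypothesis, which is immediate from the standard facts $h_\mu(f) = h_\mu(f^{-1})$ for an invertible measure-preserving transformation and $h_{\Top}(f) = h_{\Top}(f^{-1})$ for a homeomorphism of a compact metric space, and (ii) noting that the countable-intersection stability of HAW applies because every set in the family lives in the same space $\TT^2$.
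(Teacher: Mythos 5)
Your proposal is correct and follows essentially the same route as the paper: apply Theorem~\ref{thmAnosov2TorusWinningND} to each $g_m$ and $g_m^{-1}$, use the countable-intersection stability of HAW (Lemma~\ref{lemmHAWProp}(2)), and conclude full Hausdorff dimension via Lemma~\ref{lemmHAWProp}(1) and Lemma~\ref{lemmWinningProp}(1). The only difference is that you spell out explicitly why $g_m^{-1}$ satisfies the hypotheses of the theorem, a point the paper leaves implicit.
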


\begin{rema}
Since the countable intersection property holds for all HAW subsets on $\TT^2$ (see Section~\ref{secWinning}), we may intersect the set from (\ref{eqnIntersectNDs}) with any countable family of HAW subsets on $\TT^2$ and still retain the HAW property and thus also retain the property of having full Hausdorff dimension.  For example, the intersection of the set from (\ref{eqnIntersectNDs}) with any countable collection of translates of badly approximable vectors on $\TT^2$ (which is HAW~\cite[Theorem~2.5]{BFKRW}) is still HAW and, thus, has full Hausdorff dimension.

\end{rema}

\begin{rema}
We can also obtain results when the dimension $d$ is greater than two.  It follows from the proof of Theorem~\ref{thmAnosov2TorusWinningND} and from (4) of Lemma~\ref{lemmWinningProp} that any Anosov diffeomorphism having a bilipschitz (or quasisymmetric)  conjugacy with a hyperbolic automorphism on $\TT^d$ will have an $\a$-winning nondense set for some $\a>0$ (see also Section~\ref{subsecRemarks}).

\end{rema}

\noindent Theorem~\ref{thmAnosov2TorusWinningND} answers the question, first raised in~\cite[Section~6]{T4}, of whether there are nonlinear dynamical systems with winning nondense sets in dimensions greater than one. 

\section{Winning and hyperplane absolute winning}\label{secWinning}  The winning property was introduced by W.~Schmidt~\cite{Sch2} in 1966 and has many later variants (see~\cite{BFKRW, KW, Mc}).  We define the winning property and one particular strengthening, the hyperplane absolute winning (HAW) property, for $\RR^d$.  HAW was introduced in~\cite{BFKRW}.

Let $0 < \alpha <1$ and $0 < \beta<1$.  Let $S \subset \RR^d$ and $\rho(\cdot)$ denote the radius of a closed ball.  Two players, Alice $A$ and Bob $B$, alternate choosing nested closed balls \[B_1 \supset A_1 \supset B_2 \supset A_2 \cdots\] on $\RR^d$ according to the following rules:  \begin{align} \label{eqnWinningRules} \rho(A_n) =  \alpha \rho(B_n) \quad \textrm{ and } \quad \rho(B_n) =  \beta \rho(A_{n-1}).\end{align}  A set $S$ is called \textit{$(\alpha, \beta)$-winning} if Alice has a strategy guaranteeing $\cap_i B_i$ lies in $S$ for the given $\alpha$ and $\beta$.  A set $S$ is called \textit{$\a$-winning} if it is $(\alpha, \beta)$-winning for the given $\alpha$ and every $\beta$.  A set $S$ is called \textit{winning} if it is $\alpha$-winning for some $\alpha$.  These sets have four important properties for us~\cite{Mc, Sch2}: 

% Alice, \textit{wins} if the intersection of these balls lies in $S$.  

\begin{lemm}\label{lemmWinningProp}  Properties of winning sets.
 
\begin{enumerate}
	\item An $\a$-winning set in $\RR^n$ is dense and of full Hausdorff dimension.

\item A countable intersection of $\a$-winning sets is $\a$-winning.

\item An $\a$-winning set in $\RR^n$ with a countable number of points removed is $\a$-winning.

\item The image of an $\alpha$-winning set under a bilipschitz map is $\alpha'$-winning, where $\alpha'$ depends only on $\alpha$ and the map.

\end{enumerate} 
\end{lemm}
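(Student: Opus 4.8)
\medskip

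These four statements are classical, going back to Schmidt~\cite{Sch2} (with the sharp form of the dimension bound, and the bilipschitz invariance, already in~\cite{Mc}); the plan is to recall the arguments. The starting point is two elementary features of the game. First, since $\rho(B_{n+1})=\alpha\beta\,\rho(B_n)$, the radii decay geometrically, so for every play $B_1\supset A_1\supset B_2\supset\cdots$ the intersection $\bigcap_n B_n=\bigcap_n A_n$ is a single point $x_\infty$, depending only on the sequence of moves; second, a strategy for Alice is just a function of the finite history of balls played. The density half of (1) is then immediate: given a nonempty open $U\subset\RR^n$, let Bob open with any $B_1\subset U$; Alice's winning strategy forces $x_\infty\in S$, while $x_\infty\in B_1\subset U$, so $S$ meets $U$.

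For the full-dimension half of (1) I would fix a winning strategy $\sigma$ for the given $\alpha$, fix a small auxiliary $\beta$, and grow a Cantor subset $K\subseteq S$: from any opening ball let Alice respond by $\sigma$, inside that response pack a maximal family of $N\asymp\beta^{-n}$ pairwise disjoint closed balls of the prescribed smaller radius (each a legal Bob move), let $\sigma$ respond in each, and iterate. Every infinite branch is a legal $(\alpha,\beta)$-play, so its endpoint lies in $S$; the tree has branching $\asymp\beta^{-n}$, ratio $\alpha\beta$ between consecutive generations, and uniformly separated children, so the mass distribution principle yields
\[
\dim_H S \;\ge\; \dim_H K \;\ge\; \frac{n\log(1/\beta)-O(1)}{\log(1/\alpha)+\log(1/\beta)},
\]
which tends to $n$ as $\beta\to 0$; since this holds for all small $\beta$ we get $\dim_H S=n$. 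Arranging that at every stage there genuinely are $\asymp\beta^{-n}$ disjoint legal descendants, separated enough that the mass distribution estimate applies, is the step that needs the most care, and I expect it to be the main technical point.

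For (2) I would interleave. Partition $\NN$ into infinitely many infinite arithmetic progressions $I_1,I_2,\dots$ (for instance $I_k=\{\,n:n\equiv 2^{k-1}\!\!\pmod{2^k}\,\}$, of common difference $2^k$), fix $\alpha$ and an arbitrary $\beta$, and have Alice devote her moves in $I_k$ to an auxiliary game for $S_k$: seen from $S_k$, the opponent's combined move between two consecutive $I_k$-turns multiplies radii by the fixed factor $\beta_k:=\beta(\alpha\beta)^{2^k-1}\in(0,1)$, so it is a legal $(\alpha,\beta_k)$-game, which Alice wins because $S_k$ is $\alpha$-winning. Running all these simultaneously forces $x_\infty\in S_k$ for every $k$, i.e.\ $x_\infty\in\bigcap_k S_k$. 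For (3), since $S\setminus\{p_1,p_2,\dots\}=S\cap\bigcap_k(\RR^n\setminus\{p_k\})$, by (2) it suffices to note that $\RR^n\setminus\{p\}$ is $\alpha$-winning (in the relevant range of $\alpha$): whenever $p$ lies in the current ball, Alice shifts her next centre directly away from $p$ by the maximal permitted amount, which ejects $p$ for good from $\bigcap_n B_n$.

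Finally, for (4), let $f$ be $L$-bilipschitz, so $f$ and $f^{-1}$ are $L$-Lipschitz and $B(f^{-1}(c),\,r/L)\subseteq f^{-1}(B(c,r))\subseteq B(f^{-1}(c),\,Lr)$. Put $\alpha':=\alpha/L^2$, take any $\beta'$, and have Alice play the $(\alpha',\beta')$-game for $f(S)$ by maintaining a shadow $(\alpha,\beta)$-game for $S$ with $\beta:=\beta'\alpha'/\alpha\in(0,1)$: against Bob's ball $B'_n$ she passes to the inscribed preimage ball $\tilde B_n:=B(f^{-1}(\mathrm{centre}\,B'_n),\,\rho(B'_n)/L)\subseteq f^{-1}(B'_n)$, lets $\sigma$ answer with $\tilde A_n$, and returns the ball $A'_n$ of radius $\alpha'\rho(B'_n)$ about $f(\mathrm{centre}\,\tilde A_n)$, which the distortion bounds place inside $f(\tilde A_n)\subseteq B'_n$; a one-line check shows the shadow radii obey the $(\alpha,\beta)$-law and the shadow balls stay nested. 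Since $\sigma$ wins, the shadow balls shrink to some $\tilde x\in S$ and $\bigcap_n B'_n=\bigcap_n A'_n=\{f(\tilde x)\}\subseteq f(S)$; hence $f(S)$ is $\alpha'$-winning with $\alpha'=\alpha/L^2$ depending only on $\alpha$ and $L$. The same bookkeeping with a quasisymmetric distortion function in place of the constant $L$ gives the quasisymmetric statement invoked in the remarks.
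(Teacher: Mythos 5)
The paper offers no proof of this lemma at all---it simply cites Schmidt and McMullen---and your sketches reproduce exactly the classical arguments from those sources: Schmidt's Cantor-tree construction for the dimension bound, his interleaving trick for countable intersections, the push-the-centre-away-from-$p$ move for deleting points, and the shadow-game transfer (with $\alpha'=\alpha/L^2$) for bilipschitz images; all of these are correct as proof sketches, and you rightly flag the packing/separation estimate in (1) as the only step needing real care. The one point worth making explicit is one you already half-noticed: item (3) as stated is really a statement about $\alpha\le 1/2$, since for $\alpha>1/2$ an $\alpha$-winning set in $\RR^n$ is forced to equal $\RR^n$ (Bob can re-centre on any prescribed point once $2\alpha\ge 1+\alpha\beta$), whose complement of a point is then not $\alpha$-winning---so your ``relevant range of $\alpha$'' caveat is exactly the right qualification, and the imprecision lies in the lemma as quoted from the literature, not in your argument.
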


%and define the $k$-dimensional $\beta$-absolute game in the following way. 

Now fix a $k \in \{0,1, \cdots ,d - 1\}$ and restrict $0 < \beta < 1/3$. Bob initially chooses $\boldsymbol{x}_1 \in \RR^d$ and $r_1 > 0$ and forms a closed ball $B_1 = B(\boldsymbol{x}_1,r_1)$.  At each stage of the game, after
Bob chooses $\boldsymbol{x}_i \in  \RR^d$ and $r_i > 0$, Alice chooses an affine subspace $L$ of dimension $k$ and removes an $\varepsilon_i$-neighborhood $A_i = L^{(\varepsilon_i)}$ from $B_i := B(\boldsymbol{x}_i , r_i )$ for some $0 < \varepsilon_i \leq \beta r_i$. Then Bob chooses $\boldsymbol{x}_{i+1}$ and $r_{i+1} \geq \beta r_i$ such that\[
B_{i+1} := B(\boldsymbol{x}_{i+1},r_{i+1}) \subset B_i \backslash A_i.\]
A set $S$ is said to be {\em $k$-dimensionally $\beta$-absolute winning} if Alice has a strategy guaranteeing that $\cap_i B_i$ intersects $S$.  A set $S$ is {\em $k$-dimensionally absolute winning} if it is $k$-dimensionally $\beta$-absolute winning for every $0 < \beta < 1/3$.  We call $(d - 1)$-dimensionally absolute winning sets {\em hyperplane absolute winning (HAW) sets}.  These sets have three important properties for us~\cite[Proposition~2.3]{BFKRW}:

\begin{lemm}\label{lemmHAWProp}  Properties of $k$-dimensionally absolute winning sets.
 \begin{enumerate}
\item HAW (and thus $k$-dimensional absolute winning for all $0\leq k \leq d - 1$) implies $\alpha$-winning for all $0<\alpha< 1/2$. 
\item The countable intersection of $k$-dimensionally absolute winning sets is $k$-dimensionally absolute winning.
\item The image of a $k$-dimensionally absolute winning set under a $C^1$ diffeomorphism of $\RR^d$ is $k$-dimensionally absolute winning.

\end{enumerate}

\end{lemm}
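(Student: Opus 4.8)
The plan is to establish the three assertions as strategy-transfer results, in an order that exposes their dependencies; items (1) and (2) are soft, while (3) carries the analytic content. I would begin by recording the elementary monotonicity underlying (1): if $S$ is $k$-dimensionally $\beta$-absolute winning then it is $(k+1)$-dimensionally $\beta$-absolute winning. Indeed, whenever Alice's $k$-dimensional strategy calls for removing the $\varepsilon$-neighborhood of an affine $k$-plane $L$, she may instead remove the $\varepsilon$-neighborhood of an affine $(k+1)$-plane $\tilde L\supseteq L$; this only enlarges the deleted set, so each legal Bob response to the larger removal is also legal against $L^{(\varepsilon)}$, and the original strategy keeps winning. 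Iterating shows every $k$-dimensionally absolute winning set is HAW, so for (1) it suffices to prove that HAW implies $\alpha$-winning for every $0<\alpha<1/2$. For this, fix such an $\alpha$ and an arbitrary $\beta$, set $\beta_0:=\min(\alpha\beta,\,1-2\alpha)$ (so $0<\beta_0<1/3$), and have Alice play the Schmidt game of~(\ref{eqnWinningRules}) for $S$ while running a parallel HAW game on the same balls with parameter $\beta_0$. At a stage with ball $B_n=B(\boldsymbol{x}_n,r_n)$, the HAW strategy returns a hyperplane neighborhood $H^{(\varepsilon)}$ with $\varepsilon\le\beta_0 r_n\le(1-2\alpha)r_n$; since $\alpha<1/2$, a closed ball $A_n$ of radius $\alpha r_n$ fits into $B_n$ on one side of this slab, so $A_n\subset B_n\setminus H^{(\varepsilon)}$ is a legal Schmidt move. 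Bob's reply $B_{n+1}\subset A_n$ then has radius $\alpha\beta r_n\ge\beta_0 r_n$ and lies in $B_n\setminus H^{(\varepsilon)}$, hence is a legal move in the parallel HAW game. As Schmidt radii contract geometrically, $\bigcap_n B_n=\{p\}$, and the HAW strategy forces $p\in S$; thus $S$ is $\alpha$-winning.

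For (2) I would fix $0<\beta<1/3$ and strategies $\sigma_j$ witnessing that each $S_j$ is $\beta'$-absolute winning for every $\beta'<1/3$, and have Alice play one game whose turns are scheduled by a fixed periodic pattern assigning infinitely many turns to each $S_j$ together with infinitely many \emph{shrinking} turns. On a shrinking turn Alice removes the maximal central slab, the $\beta r_i$-neighborhood of a hyperplane through $\boldsymbol{x}_i$; as computed from the rules, $\beta<1/3$ forces $r_{i+1}\le(1-\beta)r_i/2<r_i/2$, so (radii being non-increasing and halved infinitely often) $\bigcap_i B_i$ is a single point $p$. On a turn assigned to $S_j$ she applies $\sigma_j$ to the subsequence of balls occurring at $S_j$'s turns; since a fixed number $n_j$ of rounds separate consecutive such turns, that subsequence obeys Bob's constraint with parameter $\beta^{n_j}<1/3$ and each $S_j$-removal keeps the next $S_j$-ball in its complement. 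Hence $\sigma_j$ (for $\beta^{n_j}$) guarantees that the common intersection meets $S_j$ — a subsequence of nested balls has the same intersection as the full sequence — and as this intersection is the single point $p$, we conclude $p\in\bigcap_j S_j$. This works for every $\beta<1/3$.

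Finally, for (3) let $f$ be a $C^1$ diffeomorphism and have Alice play for $f(S)$ by maintaining a source game for $S$ under $f^{-1}$. Given Bob's target ball $B_i$, she inscribes a source ball $\hat B_i\subset f^{-1}(B_i)$, applies a strategy for $S$ to obtain the $\hat\varepsilon_i$-neighborhood of an affine $k$-plane $\hat L_i$, and on the target side removes the $\varepsilon_i$-neighborhood $A_i$ of the affine \emph{tangent} $k$-plane to $f(\hat L_i)$ at $f(\hat{\boldsymbol{x}}_i)$, with $\varepsilon_i$ just large enough that $A_i\supseteq f(\hat L_i^{(\hat\varepsilon_i)})\cap B_i$; Bob's reply then pulls back into $\hat B_i\setminus \hat L_i^{(\hat\varepsilon_i)}$ and the source game proceeds. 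Interleaving the radius-forcing move from the proof of (2) drives the balls to a point $p$, so that all but finitely many moves take place in an arbitrarily small neighborhood of $q=f^{-1}(p)$, where $f$ is bi-Lipschitz with constants as close as desired to those of the linear isomorphism $Df_q$ — which maps balls to ellipsoids of fixed eccentricity and affine $k$-planes to affine $k$-planes exactly, the deviation of $f(\hat L_i)$ from its tangent plane being lower order as $\hat r_i\to0$ by continuity of $Df$. I expect this distortion bookkeeping to be the main obstacle: one must verify $\varepsilon_i\le\beta r_i$ throughout, which I would arrange by running the source game at a sufficiently small parameter $\hat\beta<1/3$ (permissible since $S$ is $\hat\beta$-absolute winning for every such $\hat\beta$) to absorb the fixed eccentricity factor and the inscribed-ball loss, while the vanishing modulus of continuity of $Df$ controls the curvature term for large $i$ and the finitely many large-scale moves are neutralized by passing. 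Then the source strategy forces $q\in S$, whence $p=f(q)\in f(S)$, establishing that $f(S)$ is $k$-dimensionally absolute winning.
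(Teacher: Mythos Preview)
The paper does not prove this lemma; it is quoted from \cite[Proposition~2.3]{BFKRW}. Your proposal instead sketches direct proofs, which largely follow the arguments in that reference.

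Parts (1) and (2) are essentially correct. One small imprecision in (2): a genuinely periodic schedule cannot accommodate a countably infinite family $\{S_j\}$; what you actually need, and what your phrase ``a fixed number $n_j$ of rounds separate consecutive such turns'' already indicates, is a schedule in which each $j$ recurs with gap bounded by some $n_j$ depending on $j$ (for example, assign round $i$ to $S_{v_2(i)+1}$ via the $2$-adic valuation, giving $n_j=2^j$, and interleave shrinking moves on a separate residue class). Then the $S_j$ sub-game is a legal absolute-winning game at parameter $\beta^{n_j}<1/3$, and your argument goes through.

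For (3) your outline has the right shape, and you rightly identify the distortion bookkeeping as the crux; but there is a concrete gap beyond what you flag. You assert that Bob's reply ``pulls back into $\hat B_i\setminus\hat L_i^{(\hat\varepsilon_i)}$'', which requires $f^{-1}(B_{i+1})\subset\hat B_i$. This need not hold: $\hat B_i$ is only an inscribed ball in $f^{-1}(B_i)$, and $f^{-1}(B_{i+1})$, though contained in $f^{-1}(B_i)$, may lie partly outside $\hat B_i$. The failure occurs already when $f$ is linear --- preimages of balls are then ellipsoids, and inscribed balls in nested ellipsoids need not be nested --- so shrinking to a neighborhood where $f$ is nearly $Df_q$ does not resolve it. Consequently the $\hat B_i$ may not form a legal sequence of Bob moves in any absolute-winning game for $S$, and the source strategy cannot be invoked as written. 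The proof in \cite{BFKRW} handles this with a more careful transfer, controlling uniformly how a $C^1$ map sends a slab $L^{(\varepsilon)}\cap B(x,r)$ into some $(L')^{(C\varepsilon+r\,\omega(r))}$ with $\omega$ the modulus of continuity of $Df$; your sketch would need a comparable device (or an equivalent game played on bounded-eccentricity ellipsoids rather than balls) to close.
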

\section{Proofs}\label{secProofs}  In this section, we prove Theorem~\ref{thmAnosov2TorusWinningND} and its corollary.  The two main ingredients are the HAW property, already introduced, and the following smooth classification of Anosov diffeomorphisms of the $2$-torus (see~\cite[Corollary~20.4.5]{HKIntroMD} for example):

\begin{theo}\label{thmSmoothClassAno2Torus}
Suppose $g: \TT^2 \rightarrow \TT^2$ is a $C^2$-Anosov diffeomorphism preserving $\lambda$ such that $h_{\lambda}(g) = h_{\Top}(g)$.  Then $g$ is $C^1$ conjugate to a linear automorphism.
\end{theo}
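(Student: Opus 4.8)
The plan is to combine the topological classification of Anosov surface diffeomorphisms with thermodynamic formalism and a smooth rigidity theorem, converting the entropy hypothesis into a matching of periodic eigenvalue data. First I would invoke the Franks--Manning theorem: every Anosov diffeomorphism of $\TT^2$ is topologically conjugate to its linearization $L$, the hyperbolic automorphism induced on $H_1(\TT^2,\ZZ)\cong\ZZ^2$, via a homeomorphism $h$ with $h\circ g=L\circ h$. In particular $h_{\Top}(g)=h_{\Top}(L)=\log\Lambda$, where $\Lambda>1$ is the larger-modulus eigenvalue of $L$. Since $g$ is $C^2$ and transitive, $\lambda$ is its unique SRB measure, and the stable and unstable distributions $E^s,E^u$ together with the scalar unstable Jacobian $\varphi:=\log\|Dg|_{E^u}\|$ are H\"older continuous.

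Next I would translate the entropy equality into a cohomological statement. By Pesin's entropy formula (valid because $g$ is $C^2$ and preserves the smooth measure $\lambda$), $h_{\lambda}(g)=\int_{\TT^2}\varphi\,d\lambda$. In the language of equilibrium states, $\lambda$ is the equilibrium state of the geometric potential $-\varphi$, whose pressure is $0$, whereas the unique measure of maximal entropy is the equilibrium state of the zero potential, whose pressure is $h_{\Top}(g)$. The hypothesis $h_{\lambda}(g)=h_{\Top}(g)$ forces $\lambda$ to be a measure of maximal entropy, hence \emph{the} measure of maximal entropy by uniqueness for transitive Anosov systems. Applying the standard criterion that two equilibrium states of H\"older potentials over a transitive Anosov system coincide if and only if the potentials are cohomologous up to an additive constant, I conclude that $\varphi$ is cohomologous to a constant; integrating against $\lambda$ identifies that constant as $h_{\Top}(g)=\log\Lambda$.

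I would then extract the periodic data via the Liv\v{s}ic theorem: since $\varphi-\log\Lambda$ is a H\"older coboundary, its Birkhoff sums vanish on every periodic orbit, so for each $g$-periodic point $p$ of period $n$ the unstable eigenvalue of $Dg^n_p$ has modulus $\Lambda^n$, with the sign already dictated by the orientation behavior of the topological conjugacy $h$. Applying the identical argument to $g^{-1}$ (which also preserves $\lambda$, satisfies $h_{\lambda}(g^{-1})=h_{\Top}(g^{-1})$, and has $E^s$ as its unstable direction) shows the stable eigenvalue has modulus $\Lambda^{-n}$. Thus $g$ and $L$ are topologically conjugate and share identical eigenvalue data at all corresponding periodic points.

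Finally I would appeal to the smooth rigidity theorem of de la Llave--Marco--Mori\-y\'on for Anosov surface diffeomorphisms: two topologically conjugate $C^2$ Anosov diffeomorphisms of $\TT^2$ whose periodic eigenvalue data agree are in fact $C^1$ conjugate, the conjugacy $h$ itself being the smooth one. This yields the desired $C^1$ conjugacy of $g$ with the linear automorphism $L$. I expect the genuine obstacle to be exactly this last step, namely upgrading the a priori merely continuous conjugacy to a smooth one from the matching of periodic data, which is where the analytic core of the classification resides. In the present two-dimensional setting this step can alternatively be organized leaf by leaf, combining one-dimensional rigidity of Shub--Sullivan type along the $C^1$ stable and unstable foliations with the constancy of the Jacobian cohomology class established above; but either route relies on substantial machinery beyond the elementary dynamics, so in practice I would cite the classification directly.
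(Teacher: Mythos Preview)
Your sketch is correct and follows the standard route through thermodynamic formalism (the entropy hypothesis forces $\lambda$ to coincide with the measure of maximal entropy, hence the unstable Jacobian is cohomologous to a constant), the Liv\v{s}ic theorem (extracting matching periodic eigenvalue data), and the de~la~Llave--Marco--Moriy\'on rigidity theorem (upgrading the Franks--Manning conjugacy to $C^1$). The paper, however, does not give its own proof of this statement: Theorem~\ref{thmSmoothClassAno2Torus} is quoted as a known classification result with a reference to Katok--Hasselblatt. The paper's Remarks subsection does indicate the same ingredients you use---in particular the $C^1$ regularity of the stable and unstable foliations in dimension two and the de~la~Llave--Marco--Moriy\'on characterization of smooth conjugacies---so your argument is fully in line with what the paper has in mind, only spelled out in more detail than the paper itself provides.
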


\begin{proof}[Proof of Theorem~\ref{thmAnosov2TorusWinningND}]  The Anosov diffeomorphism $g$ is $C^1$-conjugate to a linear automorphism $T$ by Theorem~\ref{thmSmoothClassAno2Torus}.  Let $h: \TT^2 \rightarrow \TT^2$ be this conjugacy from the Anosov diffeomorphism to the linear automorphism.  The nondense set $ND(T, h(\boldsymbol{x}_0))$ is HAW by~\cite[Theorem~2.6]{BFKRW}.  Consequently, $ND(g, \boldsymbol{x}_0)$ is HAW by (3) from Lemma~\ref{lemmHAWProp}.  This proves the desired result. \end{proof}

\begin{proof}[Proof of Corollary~\ref{coroAnosov2TorusWinningND}]  The desired result follows from the theorem and (1) and (2) from Lemma~\ref{lemmHAWProp} and (1) from Lemma~\ref{lemmWinningProp}. \end{proof}

\subsection{Remarks}\label{subsecRemarks}  A key ingredient in the proof of the smooth classification is the fact that the stable and unstable foliations for a $C^2$-Anosov diffeomorphism of the $2$-torus are $C^1$.  The proof of the fact relies on a more general result, namely that a codimension-$1$ stable manifold for a $C^2$-Anosov diffeomorphism of a compact manifold forms a $C^1$ foliation~\cite[Corollary~4]{HP68}.  The same applies to the unstable manifold, from which we obtain the desired $C^1$ foliations for the $2$-torus.  This leads to a characterization of smooth conjugacies for the $2$-torus~\cite{La87, MaMo1, MaMo2} (also see~\cite[Chapter 20.4.b]{HKIntroMD}) and to our smooth classification.  (Note that, for a torus of dimension three or greater, either the unstable manifold or the stable manifold will not be codimension-$1$.)  

For dimensions three and greater, we lack a smooth classification theorem.  In fact, the characterization of smooth conjugacies is more complicated, and R.~de la Llave showed that two Anosov diffeomorphisms of $\TT^d$ for $d \geq 4$ may be topologically conjugate without being Lipschitz conjugate~\cite[Theorem~6.3]{La92}.  (For the $3$-torus, the situation is not settled.)  A.~Gogolev noted that de la Llave's counterexample can be generalized so that (in most cases) one can take one of the diffeomorphisms to be a linear automorphism~\cite[Theorem~B]{Go08}.  Should a smooth classification in higher dimensions be found, it would be more complicated than Theorem~\ref{thmSmoothClassAno2Torus}, especially since it must allow for cases in which the conjugacy is only H\"older.  On the other hand, we always have a H\"older classification theorem for this context:  every Anosov diffeomorphism of $\TT^d$ is H\"older conjugate to a linear hyperbolic automorphism of $\TT^d$~\cite[Theorems~18.6.1 and~19.1.2]{HKIntroMD}.  This H\"older classification is, however, not strong enough for us because we need the conjugacy to be quasisymmetric or bilipschitz (see~\cite{Mc} and (4) from Lemma~\ref{lemmWinningProp}) to carry out the proof in this paper.  

Another way of approaching this problem for dimensions three and greater that should avoid using a smooth classification is to employ a technique used earlier~\cite{T4, Wu15}, namely using a bounded distortion property (i.e. universal bounds on the ratio of the absolute value of the Jacobian determinant for nearby points---see~\cite[Section~2.1.1]{T4} for example).  If we assume conformality, bounded distortion follows.  However, without this assumption (in dimensions two or greater), bounded distortion may not follow, and without it we may not be able to control the local ``shape'' of successive iterates of small balls.  Since the variants of the game involve intersections with balls, this lack of control is an obstacle that needs to be overcome if this approach were to succeed for more general nonlinear maps.  

Also, we note that for linear maps, one can replace bounded distortion with linear properties of the map.  In particular, one can decompose $\RR^d$ into a direct sum of subspaces appropriate for the linear action and, using the HAW property, one is allowed to remove ``bad'' affine hyperplanes~\cite{BFKRW}.  Linearity seems important to this step and may not possible with a nonlinear system.

Finally, we mention a theorem closely related to our results.  For pairs of toral mappings, one can consider the set of points with orbits that are nondense under one of the pair and dense under the other, a problem first considered in~\cite{BET}.  The nonlinear case for this setup of simultaneous dense and nondense orbits is also interesting, and, for generic pairs of $C^2$-Anosov diffeomorphisms of $\TT^2$, it is known that the desired set is uncountable and dense~\cite[Theorem~1.3]{Tse14}.

\end{document}